 \newtheorem{thm}{Theorem}[section]
 \newtheorem{cor}[thm]{Corollary}
 \newtheorem{lem}[thm]{Lemma}
 \newtheorem{prop}[thm]{Proposition}
 \theoremstyle{definition}
 \theoremstyle{remark}
 \numberwithin{equation}{section}
\newcommand{\Dset}{\mathbb{D}}
\newcommand{\Nset}{\mathbb{N}}
\newcommand{\cB}{\ensuremath{{\mathcal B}}\xspace}         
\newcommand{\cD}{\ensuremath{{\mathcal D}}\xspace}         
\newcommand{\cH}{\ensuremath{{\mathcal H}}\xspace}         
\newcommand{\hH}{\ensuremath{{\hat{\mathcal H}}}\xspace}         
\newcommand{\cK}{\ensuremath{{\mathcal K}}\xspace}         
\newcommand{\cP}{\ensuremath{{\mathcal P}}\xspace}         
\newcommand{\cU}{\ensuremath{{\mathcal U}}\xspace}         
\newcommand{\cW}{\ensuremath{{\mathcal W}}\xspace}         
\newcommand{\cX}{\ensuremath{{\mathcal X}}\xspace}         
\newcommand{\cY}{\ensuremath{{\mathcal Y}}\xspace}         
\newcommand{\1}{\ensuremath{{\rm 1\kern-.25em l}}\xspace}  
\begin{document}
%
%
%
%
%
%
%
%
%
\title
 {Transfer Functions for Pairs of Wandering Subspaces}
\author{Rolf Gohm}

\address{%
Institute of Mathematics and Physics\\
Aberystwyth University\\
Aberystwyth SY23 3BZ\\
United Kingdom}

\email{rog@aber.ac.uk}

\subjclass{Primary 47A13; Secondary 46L53}

\keywords{row isometry, multi-Toeplitz, multi-analytic, wandering subspace, transfer function, characteristic function, noncommutative Markov chain}


\begin{abstract}
To a pair of subspaces wandering with respect to a row isometry we associate a transfer function which in general is multi-Toeplitz and in interesting special cases is multi-analytic. Then we describe in an expository way how characteristic functions from operator theory as well as transfer functions from noncommutative Markov chains fit into this scheme.
\end{abstract}

\maketitle
\section*{Introduction}
It is evident to all workers in these fields that the relationship between operator theory and the theory of analytic functions is the source of many deep results. In recent work \cite{Go09} of the author a transfer function, which is in fact a multi-analytic operator, has been introduced in the context of noncommutative Markov chains. These can be thought of as toy models for interaction processes in quantum physics. The theory of multi-analytic operators, pioneered by Popescu \cite{Po89a,Po89b} and others in the late 1980's, has developed into a very successful generalization of the relationship mentioned above. Hence it is
natural to expect that noncommutative Markov chains and their transfer functions open up a possibility to apply these tools in the study of models in quantum physics.

This paper is the result of an effort to discover the common geometric underpinning which ties together these at first sight rather different settings. It is found in the tree-like structure of wandering subspaces of row isometries, more precisely: the transfer function describes the relative position of two such trees. This is worked out in Section 1 below. One of the main results in Section 1 is a geometric characterization of pairs of subspaces with a multi-analytic operator as their transfer function. 

With this work done we are in a position to discuss the existing applications in a new light which highlights common features. In Section 2 we give, from this point of view, an expository treatment of characteristic functions, both the well-known characteristic function of a contraction in the sense of Sz.Nagy and Foias \cite{SF70} and the less well-known characteristic function of a lifting introduced by Dey and Gohm \cite{DG11}. 
In Section 3 we explain in the same short but expository style the transfer function of a noncommutative Markov chain from \cite{Go09} and sketch a generalization which is natural in the setting of this paper. We hope and expect that this presentation is helpful for operator theorists to find their way into an area of potentially interesting applications. 

\section{Pairs of subspaces}
Let \hH be a Hilbert space and $V = (V_1,\ldots,V_d)$ a row isometry on $\hH$. Recall that this means that the
$V_k: \hH \rightarrow \hH$
are isometries with orthogonal ranges.
Here $d \in \Nset$ and additionally we also include the possibility of a sequence $(V_1, V_2, \ldots)$ of such isometries, writing symbolically $d=\infty$ in this case. 

Let $F^+_d$ be the free semigroup with $d$ generators (which we denote $1,\ldots,d$). Its elements are (finite) words in the generators, including the empty word (which we denote by $0$). The binary operation is concatenation of words. Let $\alpha = \alpha_1 \ldots \alpha_r$, with the $\alpha_\ell \in \{1,\ldots,d\}$, be such a word. We denote by $|\alpha|=r$ the length of the word $\alpha$. Further we define
\[
V_\alpha := V_{\alpha_1} \ldots V_{\alpha_r}  
\]
($V_0$ is the identity operator). By $V^*_\alpha$ we mean
$(V_\alpha)^* = V^*_{\alpha_r} \ldots V^*_{\alpha_1}$. We refer to \cite{Po89a,Po89b,BV05a,BV05b,DG11} for further background about this type of multi-variable operator theory. 

We want to establish an efficient description of the relative position of pairs of subspaces and their translates under a row isometry $V = (V_1,\ldots,V_d)$ on $\hH$.
Suppose $\cU$ and $\cY$ are Hilbert spaces and $i_0: \cU \rightarrow \hH$ and $j_0: \cY \rightarrow \hH$ are isometric embeddings into $\hH$. 
Further we write $i_\omega := V_\omega i_0$ and $i_\omega(\cU) =: \cU_\omega$, similarly
$j_\sigma := V_\sigma j_0$ and $j_\sigma(\cY) =: \cY_\sigma$,
where $\omega, \sigma \in F^+_d$. 
To describe the relative position of $\cU_\omega$ and
$\cY_\sigma$ we consider the contraction
\[
K(\sigma, \omega) 
:= j^*_\sigma\, i_\omega \colon \cU \rightarrow \cY\,.
\]
Note that 
\[
j_\sigma \, K(\sigma, \omega) \, i^*_\omega \colon  
\hH \rightarrow \hH
\]
is nothing but the orthogonal projection onto $\cY_\sigma$ restricted to $\cU_\omega$. The embeddings introduced above allow us to represent these contractions for varying $\sigma$ and $\omega$ on common Hilbert spaces $\cU$ and $\cY$. 

\pagebreak

\begin{lem}\label{lem:toeplitz}
$K(\sigma, \omega)$ for varying $\sigma$ and $\omega$ is a {\it multi-Toeplitz kernel}, i.e.,
\[
K: F^+_d \times F^+_d \rightarrow {\cB}(\cU,\cY)
\]
such that
\[
K(\sigma, \omega) = 
\left\{ 
\begin{array}{cc}
K(\alpha, 0) & {\mbox if}\; \sigma = \omega \alpha \\ 
K(0, \alpha) & {\mbox if}\; \omega = \sigma \alpha \\
0 & {\mbox otherwise} \\
\end{array}
\right .
\]
\end{lem}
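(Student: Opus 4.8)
The plan is to reduce the whole statement to a single algebraic identity for the operator product $V_\sigma^* V_\omega$ on $\hH$, and then transport that identity to $\cB(\cU,\cY)$ through the fixed embeddings. Since $i_\omega = V_\omega i_0$ and $j_\sigma = V_\sigma j_0$, we have
\[
K(\sigma,\omega) = j_\sigma^*\, i_\omega = j_0^*\, V_\sigma^*\, V_\omega\, i_0,
\]
so it is enough to compute $V_\sigma^* V_\omega$ and then sandwich the answer between $j_0^*$ and $i_0$.

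First I would record the cancellation rules coming from $V$ being a row isometry: the generator relations $V_k^* V_l = \delta_{kl}\,\id$, which by a short induction on word length propagate to $V_\tau^* V_\tau = \id$ for every $\tau \in F^+_d$ (a composition of isometries is again an isometry). These, together with the orthogonality $V_s^* V_t = 0$ for $s \ne t$ that is built into the definition of a row isometry, are the only facts about $V$ that enter.

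Then I would split into the three mutually exclusive cases dictated by the prefix order on the free semigroup $F^+_d$. If $\sigma = \omega\alpha$, then $V_\sigma = V_\omega V_\alpha$ and cancelling the common prefix gives $V_\sigma^* V_\omega = V_\alpha^*\,(V_\omega^* V_\omega) = V_\alpha^*$, whence $K(\sigma,\omega) = j_0^* V_\alpha^* i_0 = j_\alpha^* i_0 = K(\alpha,0)$. The case $\omega = \sigma\alpha$ is symmetric, giving $V_\sigma^* V_\omega = (V_\sigma^* V_\sigma)\,V_\alpha = V_\alpha$ and hence $K(\sigma,\omega) = j_0^* V_\alpha i_0 = j_0^* i_\alpha = K(0,\alpha)$. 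Finally, if neither word is a prefix of the other, I would write $\sigma = \tau s\sigma'$ and $\omega = \tau t\omega'$ with $\tau$ the maximal common prefix and distinct first-differing letters $s \ne t$; then
\[
V_\sigma^* V_\omega = V_{\sigma'}^* V_s^*\, (V_\tau^* V_\tau)\, V_t V_{\omega'} = V_{\sigma'}^* (V_s^* V_t) V_{\omega'} = 0,
\]
so $K(\sigma,\omega) = 0$.

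The computations are short, and the only real care required is combinatorial rather than analytic. I would need to check that the prefix trichotomy is genuinely exhaustive and disjoint, so that every pair $(\sigma,\omega)$ falls into exactly one branch, and to keep the factor order straight in the reversed adjoint $V_\sigma^* = V_{\sigma_r}^* \cdots V_{\sigma_1}^*$ so that the common prefix really does cancel against $V_\omega$ or $V_\sigma$. Once those two points are settled, each branch collapses to a one-line identity, and the three outputs reproduce exactly the three cases in the statement of the lemma.
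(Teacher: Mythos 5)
Your proposal is correct and takes essentially the same approach as the paper: cancel the common prefix using $V_\tau^* V_\tau = \id$ and invoke the orthogonality of the ranges ($V_s^* V_t = 0$ for $s \neq t$) in the remaining case. The only difference is cosmetic, in that you spell out the maximal-common-prefix decomposition for the ``otherwise'' branch, which the paper dispatches in one line.
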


\begin{proof}
If $\sigma = \omega \alpha$ then
\[
K(\sigma, \omega) = j^*_\sigma\, i_\omega 
= j^*_0 V^*_{\omega \alpha} V_\omega i_0 
= j^*_0 V^*_\alpha V^*_\omega V_\omega i_0 
= j^*_0 V^*_\alpha i_0 = K(\alpha, 0)\,. 
\]
Similarly if $\omega = \sigma \alpha$ then
\[
K(\sigma, \omega) = j^*_0 V_\alpha i_0 = K(0, \alpha)\,.
\]
Otherwise the orthogonality of the ranges of the $V_k$ forces $K(\sigma, \omega)$ to be $0$.
\end{proof}

Multi-Toeplitz kernels, in the positive definite case, have been investigated by Popescu, cf. \cite{Po99}. For more recent developments see also \cite{BV05a,BV05b}. Our focus will be on the analytic case, see Theorem \ref{thm:transfer} below. 

Let us introduce further notation and terminology. We define
\[
\cU_+ := \overline{span}\, \{\cU_\alpha \colon \alpha \in F^+_d \}
\]
\[
\cH := \hH \ominus \cU_+
\]
$\cU_+$ is the smallest closed subspace invariant for all $V_k$
containing $\cU_0$, and $\cH$ is invariant for all $V^*_k$.

A subspace $\cW \subset \hH$ is called {\it wandering} if $V_\alpha \cW \perp V_\beta \cW$ for $\alpha \not= \beta$ ($\alpha, \beta \in F^+_d$). We suppose from now on that $\cU_0$ is wandering. Then
$\cU_+ = \bigoplus_{\alpha\in F^+_d} \cU_\alpha$ (orthogonal direct sum), $V_k \cH \subset \cH \oplus \cU_0$ for all $k=1,\ldots,d$ and $V^*_\alpha\, \cU_0 \subset \cH$ for all $\alpha \not=0$. 

We can identify the space $\cU_+$ with 
$\ell^2(F^+_d, \cU)$,
the $\cU$-valued square-summable functions on $F^+_d$,
in the natural way.
If $\cY_0$ is also wandering then we can associate a multi-Toeplitz operator
\[
M \colon \ell^2(F^+_d, \cU) \rightarrow \ell^2(F^+_d, \cY)
\]
with a matrix given by the multi-Toeplitz kernel $K$ from 
Lemma \ref{lem:toeplitz}. In fact, using the identifications of 
$\cU_+ = \bigoplus_{\alpha\in F^+_d} \cU_\alpha$
with $\ell^2(F^+_d, \cU)$ and of
$\cY_+ = \bigoplus_{\alpha\in F^+_d} \cY_\alpha$
with $\ell^2(F^+_d, \cY)$
we see that $M$ is nothing but the orthogonal projection
onto $\cY_+$ restricted to $\cU_+$. Hence $M$ is a contraction which describes the relative position of $\cU_+$ and $\cY_+$.

We are interested in the case where the multi-Toeplitz kernel $K$ (resp. the multi-Toeplitz operator $M$) is {\it multi-analytic}, i.e.,
$K(0, \alpha) = 0$ for all $\alpha \not=0$. 
We note that the notion of multi-analytic operators has been studied in great detail by Popescu, cf. for example \cite{Po95}.

The following theorem gives several characterizations of multi-analyticity in our setting. 
The notation $P_{\cX}$ for the orthogonal projection onto a subspace $\cX$ is used without further comments.

\begin{thm} \label{thm:transfer}
Suppose that $\cU_0$ is wandering for the row isometry $V$ on $\hH$ and
let $\cY_0$ be any subspace of $\hH$. Then the following assertions are equivalent: 
\begin{itemize}
\item[(1)]
$K$ is multi-analytic.
\item[(2)]
$\cU_0 \perp V^*_\alpha \cY_0\;$ 
for all $\alpha \not= 0$
\item[(3)]
$\cY_0 \subset \cH \oplus \cU_0$
\item[(4)]
$V^*_k \cY_0 \subset \cH$
for all $k= 1,\ldots,d$
\item[(5)]
$V^*_\alpha \cY_0 \subset \cH$
for all $\alpha \not= 0$
\end{itemize}

\noindent
Assertions (1)-(5) imply the following assertion:
\begin{itemize}
\item[(6)]
$ P_{\cY_+} V_\alpha x = V_\alpha P_{\cY_+} x$
for all $\alpha \in F^+_d$ and $x \in \cU_+$
\end{itemize}
If in addition $\cY_0$ is also wandering for $V$ then 
(6) is equivalent to (1)-(5) and can be rewritten as
\begin{itemize}
\item[(6')]
$M\,S^\cU_\alpha = S^\cY_\alpha \,M \;$
for all $\alpha \in F^+_d$, \\
where 
$S^\cU$ and $S^\cY$ are the row shifts obtained by restricting $V$ to $\cU_+$ and $\cY_+$ and
$M = P_{\cY_+}|_{\cU_+}$
is the multi-Toeplitz operator introduced above.
\end{itemize}
\end{thm}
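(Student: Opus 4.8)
The plan is to reduce the equivalence of (1)--(5) to the single orthogonality condition $\cY_0 \perp \cU_\alpha$ for all $\alpha \neq 0$, and then to treat (6) and (6') separately. Each of (1)--(5) unwinds to this condition by a short computation: (1) because $K(0,\alpha) = j^*_0 V_\alpha i_0$ vanishes exactly when $\cU_\alpha = V_\alpha \cU_0 \perp \cY_0$; (2) because it is the adjoint form, $\langle u, V^*_\alpha y\rangle = \langle V_\alpha u, y\rangle$; (3) because $\cH \oplus \cU_0$ is the orthogonal complement of $\bigoplus_{\beta \neq 0}\cU_\beta$ in the decomposition $\hH = \cH \oplus \bigoplus_\beta \cU_\beta$ (valid as $\cU_0$ is wandering); and (5) because $V^*_\alpha \cY_0 \subset \cH = (\bigoplus_\beta \cU_\beta)^\perp$ unwinds to $\langle y, V_{\alpha\beta}u\rangle = 0$ for all $\beta$, while $\alpha\beta$ (with $\alpha \neq 0$) ranges over all nonempty words. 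I would obtain (4)$\Leftrightarrow$(5) from the $V^*_k$-invariance of $\cH$ noted before the theorem: (5)$\Rightarrow$(4) is trivial, and (4)$\Rightarrow$(5) follows by iterating. This part is pure bookkeeping with the semigroup structure.

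For the implication (1)--(5)$\Rightarrow$(6) I would first use that $\cU_+$ is $V$-invariant, so that $V_\beta x \in \cU_+$ whenever $x \in \cU_+$; it then suffices to establish (6) for a single generator $\alpha = k$ and iterate. Since $\cY_+$ is $V$-invariant we have $V_k P_{\cY_+}x \in \cY_+$, so the generator case is equivalent to $V_k(I - P_{\cY_+})x \perp \cY_+$ for every $x \in \cU_+$. Testing against the spanning vectors $V_\tau y$ ($y \in \cY_0$, $\tau \in F^+_d$) and using $\langle V_k(I - P_{\cY_+})x, V_\tau y\rangle = \langle (I - P_{\cY_+})x, V^*_k V_\tau y\rangle$, the factor $V^*_k V_\tau$ is $0$ unless $\tau$ begins with $k$, in which case $\tau = k\tau'$ and $V^*_k V_\tau y = V_{\tau'} y \in \cY_+$ is annihilated by $I - P_{\cY_+}$; both possibilities give $0$.

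The case $\tau = 0$ is where I expect the genuine obstacle. Here $V^*_k V_0 y = V^*_k y \in \cH$ by (4), so $\langle x, V^*_k y\rangle = 0$ because $x \in \cU_+ \perp \cH$, leaving $\langle P_{\cY_+}x, V^*_k y\rangle = \langle V_k P_{\cY_+}x, y\rangle$. To make this vanish I must show $V_k \cY_+ \perp \cY_0$, and I expect this to be exactly where the wandering of $\cY_0$ enters: if $\cY_0$ is wandering then $V_{k\tau'}\cY_0 \perp \cY_0$ for every $\tau'$, hence $V_k \cY_+ \perp \cY_0$ and the term is killed. Without a hypothesis of this kind the inner product need not be zero, so I anticipate that the clean implication really needs $\cY_0$ to be wandering; this one orthogonality is the crux of the theorem.

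Granting that $\cY_0$ is wandering, the reverse direction (6)$\Rightarrow$(1) is quick: for $u_0 \in \cU_0$ and $\alpha \neq 0$, (6) gives $\langle V_\alpha u_0, y_0\rangle = \langle P_{\cY_+}V_\alpha u_0, y_0\rangle = \langle V_\alpha P_{\cY_+}u_0, y_0\rangle$ (the first step since $y_0 \in \cY_+$), and expanding $P_{\cY_+}u_0$ in $\bigoplus_\sigma \cY_\sigma$ places $V_\alpha P_{\cY_+}u_0$ in $\bigoplus_\sigma \cY_{\alpha\sigma}$, whose indices are all nonempty and hence orthogonal to $\cY_0$; thus $K(0,\alpha) = 0$. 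Finally, for (6')$\Leftrightarrow$(6) I would invoke the identifications $\cU_+ \cong \ell^2(F^+_d,\cU)$ and $\cY_+ \cong \ell^2(F^+_d,\cY)$, under which $V|_{\cU_+}$ and $V|_{\cY_+}$ become the row shifts $S^\cU$ and $S^\cY$ and $M = P_{\cY_+}|_{\cU_+}$; read through these, $P_{\cY_+}V_\alpha x$ becomes $M S^\cU_\alpha x$ and $V_\alpha P_{\cY_+}x$ becomes $S^\cY_\alpha M x$, so (6) and (6') are literally the same statement.
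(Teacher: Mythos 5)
Your handling of the equivalence of (1)--(5) is correct and is, up to bookkeeping, the paper's own argument: the paper runs the cycle $(2)\Rightarrow(3)\Rightarrow(4)\Rightarrow(5)\Rightarrow(2)$ together with $(1)\Leftrightarrow(2)$, while you route everything through the single condition $\cY_0\perp\cU_\alpha$ for $\alpha\neq 0$. The same is true of your identification $(6)\Leftrightarrow(6')$ and of your two arguments in the wandering case: your direct proof of $(6)\Rightarrow(1)$ is the contrapositive of the paper's proof of $(6')\Rightarrow(2)$ (which plays $P_{\cY_0}V_\alpha u\neq 0$ against $P_{\cY_0}S^{\cY}_\alpha Mu=0$), and your reduction of (6) to a single generator $k$, which leaves $\tau=0$ as the only problematic test vector, is if anything a little cleaner than the paper's treatment of a general word $\alpha$.

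Where you deviate from the statement is the implication (1)--(5)$\,\Rightarrow\,$(6), which the theorem asserts for an \emph{arbitrary} subspace $\cY_0$, with wandering needed only for the converse; your proof uses wandering to kill the $\tau=0$ term via $V_k\cY_+\perp\cY_0$, and you flagged this as probably unavoidable. Your suspicion is correct: the unrestricted implication is false, and the paper's own proof of $(3)\Rightarrow(6)$ breaks exactly where you got stuck. The paper argues that, since $P_{V_\alpha\cY_+}V_\alpha=V_\alpha P_{\cY_+}$, it is ``enough'' to show $V_\alpha x\perp V_\beta y$ for all words $\beta$ not beginning with $\alpha$. Writing $\cZ_\alpha$ for the closed span of those $V_\beta\cY_0$, this reduction tacitly assumes $P_{\cY_+}=P_{V_\alpha\cY_+}+P_{\cZ_\alpha}$, i.e.\ that $\cY_+$ is the \emph{orthogonal} sum of $V_\alpha\cY_+$ and $\cZ_\alpha$. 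In general one only has $\cY_+=\overline{V_\alpha\cY_+ + \cZ_\alpha}$, so $\cY_+\ominus V_\alpha\cY_+$ is the closure of $P_{(V_\alpha\cY_+)^\perp}\cZ_\alpha$, which need not be orthogonal to $V_\alpha x$ even when $\cZ_\alpha$ is; the missing orthogonality $V_\alpha\cY_+\perp\cZ_\alpha$ is precisely the wandering-type condition you isolated.

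To see that the implication genuinely fails, take $d=1$, $\hH=\ell^2(\Zset)$ with the bilateral shift $Ve_n=e_{n+1}$, $\cU_0=\Cset e_0$, so $\cU_+=\overline{span}\{e_n:n\geq 0\}$ and $\cH=\overline{span}\{e_n:n<0\}$, and put $\cY_0=\Cset y$ with $y=e_0+\tfrac12 e_{-2}\in\cH\oplus\cU_0$; thus (1)--(5) hold, but $\cY_0$ is not wandering. Any $z\in\cY_+^\perp$ satisfies $\langle z,e_m\rangle+\tfrac12\langle z,e_{m-2}\rangle=0$ for all $m\geq 0$, hence $\langle z,e_{2j}\rangle=(-\tfrac12)^{j+1}\langle z,e_{-2}\rangle$ for $j\geq 0$; moreover $w:=e_{-2}+\sum_{j\geq 0}(-\tfrac12)^{j+1}e_{2j}$ lies in $\cY_+^\perp$. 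If $P_{\cY_+}e_2$ had no $e_{-2}$-component, then $q:=e_2-P_{\cY_+}e_2\in\cY_+^\perp$ would satisfy $\langle q,e_{-2}\rangle=0$, hence $\langle q,e_{2j}\rangle=0$ for all $j$, which gives $\langle P_{\cY_+}e_2,w\rangle=\langle e_2,w\rangle=\tfrac14\neq 0$ and contradicts $P_{\cY_+}e_2\perp w$. So $P_{\cY_+}V^2e_0=P_{\cY_+}e_2$ has a nonzero $e_{-2}$-component, whereas $V^2P_{\cY_+}e_0\in V^2\cY_+\subset\overline{span}\{e_n:n\geq 0\}$ has none: (6) fails for $x=e_0$ and $|\alpha|=2$. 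Note that the paper's sufficient condition is satisfied here ($V^2\cU_+\perp\{y,Vy\}$), which confirms that the fault lies in the reduction step, not in the orthogonality computation. So the theorem you actually proved --- with $\cY_0$ wandering, (1)--(5), (6) and (6') are all equivalent --- is the correct statement, and it is also all that the paper's applications to pairs of wandering subspaces require.
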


Let us describe the relative position of the embedded subspaces $\cU$ and $\cY$ characterized in Theorem \ref{thm:transfer} by saying that there is an {\it orthogonal $\cY$-past}. This terminology is suggested by (5) and some additional motivation for it is given at the end of this section.

\begin{proof}
$(1) \Leftrightarrow (2)$. In fact, 
\[
0 = K(0, \alpha) = j^*_0 V_\alpha i_0
\]
means exactly that $V_\alpha \cU_0$ is orthogonal to $\cY_0$ or,
equivalently, that $\cU_0$ is orthogonal to $V^*_\alpha \cY_0$. 

\noindent
$(2)\Rightarrow(3)$. If $(3)$ is not satisfied then there exists $y \in \cY_0$ and $\alpha \not= 0$ such that $P_{\cU_\alpha} y \not=0$. But then 
$P_{\cU_0} V^*_\alpha y \not=0$ contradicting $(2)$.

\noindent
$(3)\Rightarrow(4)$. Because for $k=1,\ldots,d$
\[
V_k \bigoplus_{\alpha\in F^+_d} \cU_\alpha
\subset \bigoplus_{\alpha \not= 0} \cU_\alpha
\perp \cH \oplus \cU_0
\]
we conclude from $\cY_0 \subset \cH \oplus \cU_0$ that
$\cU_+
\perp V^*_k \cY_0$, hence $V^*_k \cY_0 \subset \cH$.

\noindent
$(4) \Rightarrow (5)$ follows from the fact that $\cH$ is invariant for the $V^*_k$ and 

\noindent
$(5) \Rightarrow (2)$ is obvious. 

\noindent
$(3) \Rightarrow (6)$. It is elementary that
$P_{V_\alpha \cY_+} V_\alpha = V_\alpha P_{\cY_+}$ for all
$\alpha \in F^+_d$. To get (6), that is
$ P_{\cY_+} V_\alpha x = V_\alpha P_{\cY_+} x$ for all
$\alpha \in F^+_d$ and $x \in \cU_+$, it is therefore enough to
consider all vectors of the form $V_\beta y$ where $y \in \cY_0$ and $\beta \in F^+_d$ is a word which does not begin with $\alpha$ and
to show that such vectors are always orthogonal to $V_\alpha x$ where $x \in \cU_+$. By (3) we have 
$\cY_0 \subset \cH \oplus \cU_0$ which implies, because
$V_k \cH \subset \cH \oplus \cU_0$ for all $k=1,\ldots,d$, 
that $V_\beta y$ is contained in the span of $\cH$ and of all
$V_\gamma \cU_0$ where the word $\gamma \in F^+_d$  does not begin with $\alpha$. This is indeed orthogonal to $V_\alpha x$
because $\cU_0$ is wandering.

\noindent
Conversely we prove, under the additional assumption that $\cY_0$ is wandering, the implication $(6') \Rightarrow (2)$. 
If $(2)$ is not satisfied then there exists $u \in \cU_0$ and 
$\alpha \not= 0$ such that
$V_\alpha u$ is not orthogonal to $\cY_0$.
Hence
\[
P_{\cY_0}\, M\,S^\cU_\alpha u = P_{\cY_0} V_\alpha u \not=0\,.
\]
On the other hand, from $\cY_0$ wandering, we get
\[
P_{\cY_0} S^\cY_\alpha M u = 0
\]
and hence $M\,S^\cU_\alpha \not= S^\cY_\alpha\,M\,$.

\end{proof}

Note that if $\cY_0$ is not wandering then in general (6) does not imply (1)-(5),
in other words (6) may be true without $K$ being multi-analytic. Choose $\cY_0 = \hH$ for example. Though in this paper we are mainly interested in pairs of wandering subspaces it is very useful to observe that all the other implications in Theorem \ref{thm:transfer} hold more general. For example it can be convenient in applications to start with a bigger subspace $\cY_0$ and to restrict only later to a suitable wandering subspace. 
\\

Now consider the following operators:

\begin{eqnarray*}
A_k := V^*_k |_{\cH}   \colon \;\cH \rightarrow \cH,
&\quad&
B_k := V^*_k\, i_0       \colon \;\cU \rightarrow \cH, \quad\quad k=1,\ldots,d \\
C := j^*_0 |_{\cH} \colon \;\cH \rightarrow \cY,
&\quad&
D := \;j^*_0\, i_0     \colon \;\cU \rightarrow \cY\,. \\
\end{eqnarray*}

Note that the assumption that $\cU_0$ is wandering is needed to show that the $B_k$ map $\cU$ into $\cH$. If $K$ is multi-analytic then it is determined by these operators. In fact, it is elementary to check that
\[
K(\alpha, 0) = j^*_0 V^*_\alpha i_0 = 
\left\{ 
\begin{array}{cc}
D & {\mbox if}\; \alpha = 0 \\ 
C\, B_\alpha & {\mbox if}\; |\alpha| = 1 \\
C\, A_{\alpha_r} \ldots A_{\alpha_2}\,B_{\alpha_1} & {\mbox if}\;  \alpha = \alpha_1 \ldots \alpha_r,\, r = |\alpha| \ge 2
\end{array} 
\right.
\]

These formulas suggest an interpretation from the point of view of linear system theory.

\setlength{\unitlength}{1cm}
\begin{picture}(15,3.5)

\put(0.0,1){\line(0,1){2}}
\put(0.0,1){\line(1,0){2.9}}
\put(2.9,1){\line(0,1){2}}
\put(0.0,3){\line(1,0){2.9}}

\put(4.4,1){\line(0,1){2}}
\put(4.4,1){\line(1,0){3}}
\put(7.4,1){\line(0,1){2}}
\put(4.4,3){\line(1,0){3}}

\put(8.9,1){\line(0,1){2}}
\put(8.9,1){\line(1,0){2.8}}
\put(11.7,1){\line(0,1){2}}
\put(8.9,3){\line(1,0){2.8}}

\put(4.4,1.7){\vector(-1,0){1.5}}
\put(6.2,1.7){\vector(-1,0){0.6}}
\put(8.9,1.7){\vector(-1,0){1.5}}

\put(0.3,2.0){output space $\cY$}
\put(9.2,2.0){input space $\cU$}
\put(4.7,2.0){internal space $\cH$}
\put(3.5,1.2){$C$}
\put(5.7,1.2){$A_k$}
\put(8.0,1.2){$B_k$}

\put(2.7,0.5){\line(1,0){6.6}}
\put(2.7,0.5){\vector(0,1){0.5}}
\put(9.3,0.5){\line(0,1){0.5}}
\put(5.8,0.2){$D$}

\end{picture}

In fact, if we interpret $u \in \cU$ as an input then we can think of $C A_\beta B_k u$ as a family of outputs originating from it, stored in suitable copies of $\cY$. Motivated by these observations we say, in the case of an orthogonal $\cY$-past, that the associated multi-analytic kernel $K$ (or the multi-analytic operator $M$ if available) is a {\it transfer function} (for the embedded spaces $\cU$ and $\cY$). 

We remark that the scheme is close to the formalism of Ball-Vinnikov in \cite{BV05b}, compare for example formula (3.3.2) therein. Essentially the same construction, but in a commutative-variable setting, appears in \cite{BSV05}. 
In the later section on Markov chains in this paper we
describe another reappearance of this structure which has been observed in \cite{Go09}. For the moment, to make our terminology even more plausible, let us consider the simplest case where $\cU_0$ and $\cY_0$ are both wandering and $d=1$ (i.e., $V$ is a single isometry). 
Let $H^2(\cU)$ resp. $H^2(\cY)$ denote the $\cU$-valued resp. $\cY$-valued Hardy space on the complex unit disc $\Dset$. For example a function in $H^2(\cU)$ has the form
\[
\Dset \ni z \mapsto \sum^\infty_{n=0} a_n z^n 
\quad {\mbox with}\; a_n \in \cU\,.
\]
There is a natural unitary from $\bigoplus^\infty_{n=0} \cU_n$
onto $H^2(\cU)$, taking the summands as coefficients (similar for $\cY$). It can be used to move operators from one Hilbert space to the other. For more details see for example \cite{FF90}, Chapter IX.
This allows us to summarize the previous discussions in this special case as follows.

\begin{cor} \label{cor:transfer}
If $\cU_0$ and $\cY_0$ are a pair of wandering subspaces (for an isometry $V$) with orthogonal $\cY$-past then 
$M := P_{\cY_+} |_{\cU_+}$ moved to the Hardy spaces becomes a contractive multiplication operator $M_\Theta$ with
\[
\Theta(z) = D + \sum^\infty_{n=1} C A^{n-1} B z^n 
= D + C (I_\cH - zA)^{-1} z B\,.
\]
Here $A:=A_1 = V^*|_\cH,\, B:=B_1 = V^* i_0$ and $\Theta \in H^\infty_1(\cU,\cY)$, the unit ball of the algebra of bounded analytic functions on $\Dset$ with values in $\cB(\cU,\cY)$, the bounded operators from $\cU$ to $\cY$.
\end{cor}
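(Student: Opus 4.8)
The plan is to specialize the whole construction to $d=1$, transport it to the Hardy spaces by the stated unitaries, and identify the resulting operator through the classical description of operators intertwining the unilateral shift. First I would fix the single-variable picture. For $d=1$ the free semigroup $F^+_1$ is just $\Nset$ (with $0$ the empty word), so $\cU_+ = \bigoplus_{n\ge 0}\cU_n$ and $\cY_+ = \bigoplus_{m\ge 0}\cY_m$, where $\cU_n = V^n i_0(\cU)$ and $\cY_m = V^m j_0(\cY)$; both are genuine orthogonal direct sums precisely because $\cU_0$ and $\cY_0$ are wandering. Under the natural unitaries $\cU_+\cong H^2(\cU)$ and $\cY_+\cong H^2(\cY)$ (the coefficient in position $n$ corresponding to $z^n$) the row shift $S^\cU = V|_{\cU_+}$ becomes multiplication by $z$ on $H^2(\cU)$, and likewise $S^\cY$ on $H^2(\cY)$.

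Next I would invoke Theorem \ref{thm:transfer}. Since $\cU_0$ and $\cY_0$ are both wandering and the pair has an orthogonal $\cY$-past, the equivalent assertion (6$'$) holds: $M\,S^\cU = S^\cY M$. Transported to the Hardy spaces, the contraction $M = P_{\cY_+}|_{\cU_+}$ therefore intertwines the two unilateral shifts. At this point the decisive classical fact applies: a bounded operator $H^2(\cU)\to H^2(\cY)$ intertwining the shifts is exactly a multiplication operator $M_\Theta$ with $\Theta$ in $H^\infty$ of $\cB(\cU,\cY)$, and the multiplier norm equals the operator norm (see \cite{FF90}, Ch.~IX). Since $M$ is a contraction this gives $\|\Theta\|_\infty\le 1$, i.e.\ $\Theta\in H^\infty_1(\cU,\cY)$, and $M$ moved to the Hardy spaces is precisely the contractive $M_\Theta$.

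It then remains to read off the symbol. Applying $M_\Theta$ to a constant $u\in\cU$ (which corresponds to $i_0 u\in\cU_0$) and comparing the $z^m$-coefficient with the $\cY_m$-component of $P_{\cY_+}i_0 u = \sum_m j_m\,K(m,0)\,u$ identifies the $m$-th Taylor coefficient of $\Theta$ as $K(m,0)$. Substituting the formulas for $K(\alpha,0)$ displayed just before the corollary (namely $D$ for $m=0$ and $C A^{m-1}B$ for $m\ge 1$, with $A=A_1$, $B=B_1$) yields $\Theta(z) = D + \sum_{n\ge 1} C A^{n-1} B\, z^n$. Finally, because $A = V^*|_\cH$ is a contraction, $\|zA\|<1$ for $z\in\Dset$, so the Neumann series $\sum_{n\ge 1} A^{n-1} z^n = z(I_\cH - zA)^{-1}$ converges in norm and gives the closed form $\Theta(z) = D + C(I_\cH - zA)^{-1}zB$.

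The only genuinely nontrivial ingredient is the classical theorem identifying shift-intertwiners with $H^\infty$-multipliers of equal norm; this is what turns the abstract intertwining relation (6$'$) into an honest contractive multiplication operator and simultaneously places $\Theta$ in the unit ball. The remaining steps—the orthogonal decompositions, the coefficient comparison, and the summation of the geometric series—are routine bookkeeping.
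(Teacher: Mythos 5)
Your proof is correct and follows essentially the same route as the paper, which presents this corollary as a summary of the preceding discussion: Theorem \ref{thm:transfer}(6$'$) gives the shift-intertwining, the classical identification of shift-intertwiners with contractive $H^\infty$-multipliers (cited from \cite{FF90}, Ch.~IX) converts $M$ into $M_\Theta$, and the displayed formulas for $K(\alpha,0)$ in terms of $A,B,C,D$ yield the Taylor coefficients and the Neumann-series closed form. Your write-up simply makes explicit the steps the paper leaves implicit, with no gaps.
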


This means that in this case $M$ is an analytic operator in the sense of \cite{RR85} 
(except for the insignificant fact that it operates between different Hilbert spaces). 

In the general noncommutative case we can similarly encode all
the entries $K(\alpha,0)$ (as described above) into a
formal power series which fully describes a multi-analytic operator $M$.

\begin{cor} \label{cor:transfer2}
If $\cU_0$ is a wandering subspace for a row isometry $V = (V_1,\ldots,V_d)$ and $\cY_0$ is another subspace then, with indeterminates
$z_1,\ldots,z_d$ which are freely noncommuting among each other but commuting with the operators,
\[
\Theta(z_1,\ldots,z_d) 
:= \sum_{\alpha \in F^+_d} K(\alpha, 0)\, z^\alpha 
= D + C \sum^\infty_{r=1} (ZA)^{r-1} ZB
= D + C (I_\cH - ZA)^{-1} ZB
\]
where $Z = (z_1\,I_\cH, \ldots ,z_d\,I_\cH),\;
A = (A_1, \ldots, A_d)^T,\; B = (B_1, \ldots, B_d)^T$,
the transpose indicating that $A$ and $B$ should be interpreted as (operator-valued) column vectors.
Further $z^\alpha := z_{\alpha_n} \ldots z_{\alpha_1}$ if $\alpha = \alpha_1 \ldots
\alpha_n$ is a word of length $n$. 
\end{cor}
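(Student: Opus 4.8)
The plan is to read the asserted identity as a purely formal equality in the ring of power series in the freely noncommuting indeterminates $z_1,\ldots,z_d$ with coefficients in $\cB(\cU,\cY)$, and to prove it by comparing the coefficient of each noncommutative monomial on the two sides. The second equality, $C\sum_{r=1}^\infty (ZA)^{r-1}ZB = C(I_\cH - ZA)^{-1}ZB$, is immediate once $(I_\cH - ZA)^{-1}$ is read as the formal Neumann series $\sum_{r\geq 0}(ZA)^r$, so the real content is the first equality. For the coefficients I would use the formulas for $K(\alpha,0) = j^*_0 V^*_\alpha i_0$ established just before the corollary, namely $K(0,0)=D$, $K(\alpha,0)=C\,B_\alpha$ when $|\alpha|=1$, and $K(\alpha,0) = C\,A_{\alpha_r}\cdots A_{\alpha_2}\,B_{\alpha_1}$ when $\alpha=\alpha_1\cdots\alpha_r$ with $r\geq 2$; these rest only on $\cU_0$ being wandering, which guarantees that the successive factors map through $\cH$, and not on any multi-analyticity.

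First I would expand the right-hand side. Since $Z=(z_1 I_\cH,\ldots,z_d I_\cH)$ is a row while $A$ and $B$ are columns, one has $ZA=\sum_{k=1}^d z_k A_k$ and $ZB=\sum_{k=1}^d z_k B_k$. As the $z_k$ commute with every operator, each $z_k$ may be pulled to the left of all operator factors, so that for every $r\geq 1$
\[
C\,(ZA)^{r-1}ZB = \sum_{k_1,\ldots,k_r} z_{k_1}\cdots z_{k_r}\; C\,A_{k_1}\cdots A_{k_{r-1}}\,B_{k_r}.
\]
Because the $z_k$ are \emph{freely} noncommuting, the monomials $z_{k_1}\cdots z_{k_r}$ attached to distinct index tuples are distinct, so this display already records the coefficient of each monomial of degree $r$.

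Next I would match this against $\sum_{|\alpha|=r} K(\alpha,0)\,z^\alpha$ through the substitution $k_j=\alpha_{r+1-j}$, that is, by reading the index tuple $(k_1,\ldots,k_r)$ backwards as the word $\alpha=\alpha_1\cdots\alpha_r$. Under this substitution $z_{k_1}\cdots z_{k_r}=z_{\alpha_r}\cdots z_{\alpha_1}=z^\alpha$, exactly the reversal built into the definition $z^\alpha:=z_{\alpha_n}\cdots z_{\alpha_1}$, while simultaneously $C\,A_{k_1}\cdots A_{k_{r-1}}\,B_{k_r}=C\,A_{\alpha_r}\cdots A_{\alpha_2}\,B_{\alpha_1}=K(\alpha,0)$. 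Summing over $r\geq 1$ and adjoining the degree-zero term $D=K(0,0)$ then yields the full series $\sum_{\alpha\in F^+_d}K(\alpha,0)\,z^\alpha$, which is the claim.

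The computation is elementary bookkeeping, and the one point I regard as its crux is the compatibility of the two order reversals. The operator product in $K(\alpha,0)$ is reversed relative to the word $\alpha$ because $V^*_\alpha = V^*_{\alpha_r}\cdots V^*_{\alpha_1}$ makes $\alpha\mapsto V^*_\alpha$ an anti-homomorphism, whereas the indeterminates enter $(ZA)^{r-1}ZB$ in the natural left-to-right order. The definition $z^\alpha=z_{\alpha_n}\cdots z_{\alpha_1}$ reverses the word precisely so that these two conventions cancel; confirming this alignment is what the argument really comes down to. Finally I would check the low-degree terms separately, since the general formula for $K(\alpha,0)$ presupposes $|\alpha|\geq 2$: the degree-zero term is $D$, and the degree-one part $C\,ZB=\sum_k z_k\,C\,B_k$ reproduces $\sum_{|\alpha|=1}K(\alpha,0)\,z^\alpha$.
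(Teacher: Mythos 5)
Your proof is correct and follows exactly the route the paper intends: the corollary is presented there as a direct encoding of the previously displayed formulas $K(0,0)=D$, $K(\alpha,0)=C B_\alpha$ for $|\alpha|=1$, and $K(\alpha,0)=C A_{\alpha_r}\cdots A_{\alpha_2}B_{\alpha_1}$ for $|\alpha|\ge 2$ into a formal power series, with the reversal convention $z^\alpha=z_{\alpha_n}\cdots z_{\alpha_1}$ chosen precisely to absorb the anti-homomorphism $\alpha\mapsto V^*_\alpha$, which is the coefficient-matching bookkeeping you carry out. Your observation that only the wandering property of $\cU_0$ (not multi-analyticity) is needed is also consistent with the paper's hypotheses.
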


Such a formalism is explained in more detail and used systematically in \cite{BV05b}.
Using the language of system theory we have
all the relevant information in the socalled {\it system matrix}
\[
\Sigma =
\left( 
\begin{array}{cc}
A & B \\ 
C & D \\
\end{array} 
\right)\,.
\]

Let us put these results into the context of other work already done in operator theory and focus on the case $d=1$ again. We could have extended the isometry $V$ to a unitary $\tilde{V}$ on a larger Hilbert space. If we now define $\cY_k
= \tilde{V}^k \cY_0$ also for $k<0$ then it is natural to call $\bigoplus_{k<0} \cY_k$
the $\cY$-past. In this extended setting the
fact that we have orthogonal $\cY$-past ensures that $\tilde{V}$ is a coupling in the sense of \cite{FF90}, chapter VII.7, between the right shifts on the orthogonal spaces $\bigoplus^\infty_{k \ge 0} \cU_n$ and $\bigoplus_{k<0} \cY_k$.
Further our operator $M$ can now be interpreted as the contractive intertwining lifting of the zero intertwiner between the two shifts which is canonically associated to the coupling $\tilde{V}$. See \cite{FF90}, Chapter VII.8, for this construction. We don't go into this here, the book \cite{FF90} contains detailed discussions how analytic functions arise in the classification of such structures.

We remark that in the case $d > 1$ it is more complicated to develop
the analogue of such a `two-sided' setting but this has been worked out in \cite{BV05a,BV05b} within a theory of Haplitz kernels and  Cuntz weights. For the purposes of this paper it turns out that the simpler `one-sided' setting, as presented in this section and in particular in Theorem \ref{thm:transfer}, is sufficient. 

\section{Examples: Characteristic Functions}

The examples in this section are well known and the main emphasis is therefore to show that they fit naturally into the scheme developed in the previous section and that thinking about them in this way simplifies the constructions.
For further simplification we only work through the details of the case $d=1$, i.e., a single isometry $V: \hH \rightarrow \hH$.

Suppose that $\cU_0$ and $\cY_0$ are a pair of wandering subspaces with orthogonal $\cY$-past and with system matrix
\[
\Sigma =
\left( 
\begin{array}{cc}
A & B \\ 
C & D \\
\end{array} 
\right)
\colon
\cH \oplus \cU \rightarrow \cH \oplus \cY
\]
For the adjoint $\Sigma^*$ we obtain from the definition of $A,B,C,D$:
\[
\Sigma^* =
\left( 
\begin{array}{cc}
A^* & C^* \\ 
B^* & D^* \\
\end{array} 
\right)
\colon
h \oplus y \mapsto P_H \big[ Vh + j_0(y) \big]
\oplus i^*_0 P_{\cU_0} \big[ Vh + j_0(y) \big]
\]

\subsection{Example} 
Let us consider a special case of the previous setting where
$V \cH \perp j_0(\cY)$. Then $\Sigma^*$ is isometric, i.e., $\Sigma$ is a coisometry. 

\noindent
Conversely, for any Hilbert spaces $\cH,\, \cU,\, \cY$ let
$\Sigma =
\left( 
\begin{array}{cc}
A & B \\ 
C & D \\
\end{array}
\right)
\colon
\cH \oplus \cU \rightarrow \cH \oplus \cY$
be any coisometry. Now define the Hilbert space
$\hH := \cH \oplus \bigoplus^\infty_{n=0} \cU_n$ with the $\cU_n$
copies of $\cU$, the embeddings
\[
i_0(\cU) := \cU_0, \quad j_0 := (I_\cH \oplus i_0) \Sigma^* |_\cY
\]
and an isometry $V$ by $V |_\cH := (I_\cH \oplus i_0) \Sigma^* |_\cH$ and acting as a right shift on
$\cU_+ = \bigoplus^\infty_{n=0} \cU_n$. Then $\cU_0$ and $\cY_0$ are a pair of wandering subspaces with orthogonal $\cY$-past and with system matrix $\Sigma$. In fact, orthogonal $\cY$-past is clear from
$\cY_0 \subset \cH \oplus \cU_0$ and Theorem \ref{thm:transfer}
and then $\cY_0$ wandering is an immediate consequence of $V \cH \perp j_0(\cY)$ and the specific form of $V$. 

This situation occurs in the Sz.Nagy-Foias theory of characteristic functions for contractions. Let us sketch briefly how this fits in.
Let $T \in \cB(\cH)$ be a contraction. Then we have defect operators $D_T = \sqrt{I-T^*T}$ and $D_{T^*}$ with defect spaces
$\cD_T$ and $\cD_{T^*}$ defined as the closure of their ranges. 
The reader can easily check that the construction above applies with $\cU = \cD_T,\;\cY = \cD_{T^*}$ and with the unitary rotation matrix 
\[
\Sigma =
\left( 
\begin{array}{cc}
A & B \\ 
C & D \\
\end{array} 
\right)
=
\left( 
\begin{array}{cc}
T^* & D_T \\ 
D_{T^*} & -T \\
\end{array} 
\right)
\colon
\cH \oplus \cU \rightarrow \cH \oplus \cY
\]
Then $V$ is the minimal isometric dilation of $T$ and the transfer
function for the pair $\cU_0$ and $\cY_0$ given by
\[
\Theta(z) = -T + D_{T^*} (I_\cH - zT^*)^{-1} z\,D_T
\]
is nothing but the well-known Sz.Nagy-Foias {\it characteristic function} of $T$. In fact it is characteristic in the sense that it characterizes $T$ up to unitary equivalence only if $T$ is completely non-unitary (cf. \cite{SF70} or \cite{FF90}). So in the general case it may be better to refer to $\Theta$ as the transfer function associated to $T$. 

It is possible to handle the multi-variable case ($d >1$), first studied by Popescu in \cite{Po89b}, in a very similar way and the result, if expressed in the notation explained for Corollary \ref{cor:transfer2},
is very similar: The transfer function associated to a row contraction
$T = (T_1,\ldots,T_d) \colon \bigoplus^d_1 \cH \rightarrow \cH$ is 
\[
\Theta(z_1,\ldots,z_d) =  -T + D_{T^*} (I_\cH - Z T^*)^{-1} Z D_T 
\]
where $Z = (z_1\,I_\cH, \ldots ,z_d\,I_\cH)$. It is shown in  (\cite{Po89b}, 5.4) that $\Theta$ is characteristic in the sense of being a complete unitary invariant if $T$ is completely non-coisometric. It is further shown in (\cite{BV05b}, 5.3.3) that to get a complete unitary invariant in the class of completely non-unitary row contractions one can consider a characteristic pair $(\Theta, L)$ where $L$ is a Cuntz weight.

\subsection{Example}

But there are other possibilities to obtain a pair of wandering subspaces $\cU_0$ and $\cY_0$ with orthogonal $\cY$-past than the scheme explained in the previous example. We go back to the case $d=1$ and assume again that $\hH := \cH \oplus \bigoplus^\infty_{n=0} \cU_n$ and that
an isometry $V$ is given on $\hH$ which acts as a right shift on 
$\bigoplus^\infty_{n=0} \cU_n$. Now suppose further that $\cH_S$ is a subspace of $\cH$ which is $V^*$-invariant. Then for any subspace
$\cY_0$ satisfying
\[
\cY_0 \subset \overline{span}\{\cH_S, V \cH_S\} \ominus \cH_S
\]
it follows that $\cU_0$ and $\cY_0$ are a pair of wandering subspaces with orthogonal $\cY$-past. In fact, because 
$\cY_0 \perp \cH_S$ we have for $k \ge 1$ that $V^k \cY_0 \perp \cH_S$, but also $V^{k-1} \cY_0 \perp \cH_S$ so that 
$V^k \cY_0 \perp V \cH_S$. Hence $V^k \cY_0 \perp \cY_0$ for all $k \ge 1$, i.e., $\cY_0$ is a wandering subspace. Together with $V\,\cH \subset \cH \oplus \cU_0$ and 
Theorem \ref{thm:transfer} this establishes the claim.

This situation occurs in the theory of characteristic functions for liftings (cf. \cite{DG11}). As this is less well known than the Sz.Nagy-Foias theory in the previous example and the presentation in \cite{DG11} 
gives the general case $d \ge 1$ using a different
approach and a different notation we think it is instructive to work out explicitly some details of this transfer function in the case $d=1$ with the methods of this paper.

As in the previous subsection  
let $T \in \cB(\cH)$ be a contraction, $\cU := \cD_T,
\; \hH := \cH \oplus \bigoplus^\infty_{n=0} \cU_n,\;i_0(\cU) = \cU_0$, $\,V$ the minimal isometric dilation and we still have
$A = V^* |_\cH = T^*$ and $B = V^* i_0 = D_T$. But now suppose that $\cH = \cH_S \oplus \cH_R$ such that $\cH_S$ is invariant for $T^*$, in other words $T$ is a block matrix
\[
T =
\left( 
\begin{array}{cc}
S & 0 \\ 
Q & R \\
\end{array} 
\right)
\]
with respect to $\cH = \cH_S \oplus \cH_R$. We also say that 
$T \in \cB(\cH)$ is a {\it lifting} of $S \in \cB(\cH_S)$. Then $V$ is also an isometric dilation of $S$, i.e., $P_{\cH_S} V^n |_{\cH_S} = S^n$ for all $n \in \Nset$,
and it restricts to the minimal isometric dilation $V_S$ of $S$ on a reducing subspace. The subspace $\cH_S$ is invariant for $V^*$ and we obtain a situation as described in the beginning of this subsection by putting $\cY := \cD_{S}$ and
for $h_S \in \cH_S$
\[
j_0(D_{S} h_S) := (V_{S}-S) h_S = (V-S) h_S
= Q h_S \oplus i_0(D_T h_S) \in \cH_R \oplus \cU_0\,.
\]
Hence we have orthogonal $\cY$-past and $\cU_0$ and $\cY_0$ are both wandering. 

It is well known about contractive liftings such as $T$ that we always have
\[
Q = D_{R^*}\, \gamma^*\, D_{S}: \cH_S \rightarrow \cH_R
\]
with a contraction $\gamma: \cD_{R^*} \rightarrow \cD_{S}$
(cf. \cite{FF90}, Chapter IV, Lemma 2.1). 
We conclude that
\[
C = j^*_0 |_\cH = \gamma\, D_{R^*}\, P_{\cH_R}\,.
\]
To compute $D = j^*_0 i_0$ more explicitly note that for
$h_S \in \cH_S,\,h_R \in \cH_R$
\[
j^*_0 V h_S = j^*_0 \big( S h_S \oplus j_0(D_{S} h_S) 
= D_{S} h_S,
\quad\quad j^*_0 V h_R = 0,
\]
[the latter because $V \cH_R \perp \overline{span}\{V \cH_S, \cH_S\}
\supset j_0(\cD_{S}$)].

With $\cH \ni h = h_S \oplus h_R \in \cH_S \oplus \cH_R$ we can compute $D$ as follows:
\begin{eqnarray*}
D (D_T h) &=& j^*_0 \big( (V-T)h \big) \\
&=& j^*_0 Vh - j^*_0 Th 
= j^*_0 (V h_S + V h_R) - CT h \\
&=& D_{S} h_S - \gamma D_{R^*} (T h_S + R h_R) \\
&=&(D_{S} - \gamma D_{R^*} Q) h_S 
- \gamma D_{R^*} R h_R \\
&=&(D_{S} - \gamma D_{R^*} Q) h_S 
- \gamma R D_{R} h_R \\
\end{eqnarray*}
(using $D_{R^*} R = R D_{R}$ in the last line). Hence we get a transfer function
\begin{eqnarray*}
\Theta(z) = D + \sum_{n\ge 1} \gamma D_{R^*} P_{\cH_R} (z T^*)^{n-1} z D_T  
= D + \gamma D_{R^*} P_{\cH_R} (I_\cH - z T^*)^{-1} z D_T \\
= D + \sum_{n\ge 1} \gamma D_{R^*} (z R^*)^{n-1} P_{\cH_R} z D_T
= D + \gamma D_{R^*} (I_{\cH_R} - z R^*)^{-1} P_{\cH_R} z D_T
\end{eqnarray*}
The domain of $\Theta(z)$ (for each $z$) is $\cU = \cD_T$. We gain additional insights by evaluating
$\Theta(z)$ on
$D_T h_R = D_R h_R$ with $h_R \in \cH_R$ and on $D_T h_S$ with $h_S \in \cH_S$. 
\begin{eqnarray*}
\Theta(z)(D_T h_R) &=& D (D_T h_R)
+ \sum_{n\ge 1} \gamma D_{R^*} (z R^*)^{n-1} P_{\cH_R} z D_T (D_T h_R) \\
&=& \gamma \big[ -R + D_{R^*} (I - z R^*)^{-1} z D_R \big] (D_R h_R)
\end{eqnarray*} 
which shows that
$\Theta(z)$ restricted to $D_T \cH_R = D_{R} \cH_R$ is nothing but $\gamma$ times the transfer function associated to $R$ in the sense of Sz.Nagy and Foias, as discussed in the previous subsection. Its presence can be explained by the fact that $V$ restricted to $\hH \ominus \cH_S$ also provides an isometric dilation of $R$. For the other restriction 
$\Theta(z) |_{D_T \cH_S}$
we find, using 
$P_{\cH_R} z D^2_T h_S = P_{\cH_R} z (I - T^*T) h_S = - z R^* Q \, h_S$,
\begin{eqnarray*}
\Theta(z) (D_T h_S) 
&=& D (D_T h_S)
+ \sum_{n\ge 1} \gamma D_{R^*} (z R^*)^{n-1} P_{\cH_R} z D_T (D_T h_S) \\
&=& \big[ D_{S} - \gamma D_{R^*} Q
- \sum_{n\ge 1} \gamma D_{R^*} (z R^*)^n Q \big] (h_S) \\
&=& \big[ D_{S} - \gamma D_{R^*}\,
\sum_{n\ge 0} (z R^*)^n Q \big] (h_S) \\
&=& \big[ I - \gamma D_{R^*} (I-z R^*)^{-1} D_{R^*} \gamma^* \big] D_{S} h_S \\
\end{eqnarray*}

Again the multi-variable case ($d>1$) can be handled similarly and yields similar results. Here we
investigate a row contraction $T = (T_1,\ldots,T_d)$ which is a lifting in the sense that
\[
T_k =
\left( 
\begin{array}{cc}
S_k & 0 \\ 
Q_k & R_k \\
\end{array} 
\right)
\]
for all $k=1,\ldots,d$. All the formulas for transfer functions derived above have been written in a form which makes sense and which is still valid for the multi-variable case if we simply replace the variable $z$ by $Z = (z_1 I, \ldots, z_d I)$ (as in Corollary
\ref{cor:transfer2}) and the vectors $h_S,\,h_R$ by
$d$-tuples of vectors. 

These transfer functions have been introduced in \cite{DG11};
see Section 4 therein, in particular formulas (4.6)
and (4.5), for an alternative approach to the facts sketched above. 
Among other things it is further investigated in \cite{DG11} in which cases such transfer functions are characteristic for the lifting, i.e., characterize the lifting $T$ given $S$ up to unitary equivalence. 

\section{Examples: Noncommutative Markov Chains} 

There is another way how transfer functions as described in Section 1 appear in applications, namely in the theory of noncommutative Markov chains. This has been observed in \cite{Go09} and to work out a common framework in order to facilitate the discussion of similarities has been a major motivation for this paper.

We quickly review the setting of \cite{Go09} as far as it is needed to make our point, referring to that paper for more details. An {\it interaction}
is defined as a unitary
\[
U \colon \cH \otimes \cK \rightarrow \cH \otimes \cP
\]
where $\cH, \cK, \cP$ are Hilbert spaces. In quantum physics it is common to describe the aggregation of different parts by a tensor product of Hilbert spaces and in this case we may think of $U$ as one step of  a discretized interacting dynamics. (For such an interpretation we may take $\cK=\cP$ and think of $\cK$ and $\cP$ as describing the same part before and after the interaction. But mathematically it is more transparent to treat them as two distinguishable  spaces.) If $\cH$ represents a fixed quantum system, say an atom, and interactions take place with a wave passing by, say a light beam, then it is natural, at least as a toy model, to represent repeated interactions ($n$ steps) by
\[
U(n) := U_n \ldots U_2 U_1 \colon 
\cH \otimes \bigotimes^n_{\ell=1} \cK_\ell
\mapsto
\cH \otimes \bigotimes^n_{\ell=1} \cP_\ell
\]
where the $\cK_\ell$ (resp. $\cP_\ell$) are copies of $\cK$ (resp. $\cP$), and $U_\ell$ acts as $U$ from $\cH \otimes \cK_\ell$ to $\cH \otimes \cP_\ell$, identical at the other parts. 

\setlength{\unitlength}{1cm}
\begin{picture}(15,3.5)
\put(0.5,2){\circle{1}} 
\put(2.0,2){\vector(-1,0){1}}
\put(2.0,1){\line(0,1){2}}
\put(3.0,1.5){\line(0,1){1.5}}
\put(4.0,1.5){\line(0,1){1.5}}
\put(5.0,1){\line(0,1){2}}
\put(5.1,1){\ldots}
\put(2.4,1.9){$1$}
\put(3.4,1.9){$2$}
\put(4.4,1.9){$3$}
\put(0,1){atom}
\put(3.0,1){beam}
\put(6.8,1){$\cH \quad \otimes \quad \cK_1 \quad \otimes \quad \cK_2 \quad \otimes \quad \cK_3 \; \ldots$}
\qbezier(7,1.5)(7.9,2)(8.5,1.5)
\qbezier(7,1.5)(8.8,3)(10,1.5)
\qbezier(7,1.5)(9.7,4)(11.5,1.5)
\put(8.4,1.7){$U_1$}
\put(9.6,2){$U_2$}
\put(10.8,2.3){$U_3$}
\end{picture}

Choosing unit vectors $\Omega_\cK \in \cK$ and $\Omega_\cP \in \cP$ we can also form infinite tensor products along these unit vectors and obtain $U(n)$ for every $n \in \Nset$ on a common Hilbert space. 
Such a toy model of quantum repeated interactions can mathematically be thought of as a noncommutative Markov chain. We refer to \cite{Go09} for some motivation for this terminology by analogies with classical Markov chains.

It is proved in  \cite{Go09} (in a slightly different language) that if we have another unit vector $\Omega_\cH \in \cH$ such that 
$U (\Omega_\cH \otimes \Omega_\cK )
= \Omega_\cH \otimes \Omega_\cP$
(we call these unit vectors {\it vacuum vectors} in this case)
then we obtain a pair of wandering subspaces $\cU_0$ and $\cY_0$
with orthogonal $\cY$-past for a row isometry $V$, notation consistent with Section 1, as follows:
\[
\hH := \cH \otimes \bigotimes^\infty_{\ell=1} \cK_\ell \quad
\supset \cH  \otimes \bigotimes^\infty_{\ell=1} \Omega_{\cK_\ell}
\simeq \cH
\]
i.e., the latter subspace of $\hH$ is identified with $\cH$. The row isometry $V$
on $\hH$ is of the form 
\[
V := (V_1, \ldots, V_d), \quad d = dim\, \cP,
\]
where $dim\, \cP$ is the number of elements in an orthonormal basis
of $\cP$. Let $(\epsilon_k)^d_{k=1}$ be such an orthonormal basis of $\cP = \cP_1$, fixed from now on. 

\noindent
Then for $\xi \in \cH$ and $\eta \in \bigotimes^\infty_{\ell=1} \cK_\ell$
\[
V_k \big( \xi \otimes \eta \big) := U^*_1(\xi \otimes \epsilon_k \otimes \eta)
\in (\cH \otimes \cK_1) \otimes \bigotimes^\infty_{\ell=2} \cK_\ell
\]
Note that $\eta$ is shifted to the right in the tensor product and appears as $\eta \in \bigotimes^\infty_{\ell=2} \cK_\ell$ on the right hand side. It is immediate that $V$ is a row isometry. 
To get used to this definition the reader is invited to verify the formula
\[
V_\alpha \big( \xi \otimes \eta \big)
= U(r)^* (\xi \otimes \epsilon_{\alpha_1} \otimes \ldots \otimes \epsilon_{\alpha_r} \otimes \eta)
\in (\cH \otimes \cK_1 \otimes \ldots \cK_r) 
\otimes \bigotimes^\infty_{\ell=r+1} \cK_\ell
\]
where $\alpha = \alpha_1 \ldots \alpha_r \in F^+_d$ with
$|\alpha| = r$
and on the right hand side
$\eta$ now appears as  $\eta \in \bigotimes^\infty_{\ell=r+1} \cK_\ell$. It becomes clear that the properties of the repeated interaction are encoded into properties of the row isometry $V$.
\\

Finally we define the pair of embedded subspaces:
\begin{eqnarray*}
\cU &:=&  \cH \otimes \big(\Omega_{\cK}\big)^\perp 
\subset \cH \otimes \cK \,,\\
\cU_0=i_0(\cU)  &:=& \cH \otimes \big(\Omega_{\cK_1}\big)^\perp 
\otimes  \bigotimes^\infty_{\ell=2} \Omega_{\cK_\ell}\,,
\end{eqnarray*}
\begin{eqnarray*}
\cY &:=& \big(\Omega_{\cP} \big)^\perp
\subset \cP\,, \\
\cY_0=j_0(\cY) &:=& U^*_1 \;\big( 
\Omega_\cH \otimes (\Omega_{\cP_1} )^\perp
\otimes  \bigotimes^\infty_{\ell=2} \Omega_{\cK_\ell} \big)\,.
\end{eqnarray*}

From the specific form of the isometries $V_k$ it is easy to check that $\cU_0$ is wandering and that $\hH = \cH \oplus \cU_+$.
The proof that $\cY_0$ is wandering can be found in \cite{Go09} or deduced from Proposition \ref{prop:wandering} below (which covers a more general situation).
From Theorem \ref{thm:transfer} we have an associated transfer function which can be made explicit as a multi-analytic kernel $K$ or as a (contractive) multi-analytic operator $M$. It may be called a transfer function of the noncommutative Markov process.
With $h \oplus u \in \cH \oplus \cU = \cH \otimes \cK$ (here we identify $\cH$ with $\cH \otimes \Omega_\cK$) we find the operators $A_k, B_k, C, D$ appearing in the system matrix 
$\Sigma$ to be related to the interaction $U$ by
\begin{eqnarray*}
U (h \oplus u) &=& \sum^d_{k=1} \big(A_k h + B_k u \big) \otimes \epsilon_k  \;\in \cH \otimes \cP \\
P_{\Omega_\cH \otimes \cY}\, U (h \oplus u) &=& C h + D u \;\in \cY
\end{eqnarray*}
(where we have to identify $\Omega_\cH \otimes \cY$ and $\cY$ for the last equation).
It is further discussed in \cite{Go09} how for models in quantum physics these operators and the coefficients of the transfer function built from them can be interpreted, and it is shown that the transfer function can be used to study questions about observability and about scattering theory (outgoing Cuntz scattering systems \cite{BV05b} and scattering theory for Markov chains \cite{KM00}).  

Let us finally indicate that the additional ideas introduced in this paper provide a
flexible setting for generalizations. Let us consider the situation above but without assuming the existence of vacuum vectors.
With $\Omega_\cK \in \cK$ being an arbitrary unit vector we can easily check that $\cU_0$ as defined above is still a wandering subspace for $V$. Hence for an arbitrary subspace $\cY_0$ of 
\[
\cH \oplus \cU_0 = \cH \otimes \cK_1 \otimes \bigotimes^\infty_{\ell=2} \Omega_{\cK_\ell}
\]
we conclude, by Theorem \ref{thm:transfer}, that we have orthogonal $\cY$-past and there exists a corresponding transfer function corresponding to a multi-analytic kernel $K$. When is $\cY_0$ wandering? A sufficient criterion generalizing the situation with vacuum vectors is provided by the following

\begin{prop} \label{prop:wandering}
Let $\cH_S$ be a subspace of $\cH$ such that $U \big( \cH_S \otimes \Omega_{\cK} \big) \subset
\cH_S \otimes \cP$. Then any subspace 
\[
\cY_0 \subset U^*_1( \cH_S \otimes \cP_1) \ominus (\cH_S \otimes \Omega_{\cK_1}) 
\]
is wandering.
\\
(Here we adapt the convention to omit a tensoring with
$\bigotimes^\infty_{\ell=2} \Omega_{\cK_\ell}$ in the notation.)
\end{prop}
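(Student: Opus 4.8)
The plan is to reduce the wandering condition to the orthogonality of $\cY_0$-translates, and to exploit the specific action of the row isometry $V_k$ as a combination of the shift in the tensor product and the unitary $U$. Recall that $\cY_0$ is wandering precisely when $V_\alpha \cY_0 \perp V_\beta \cY_0$ for all $\alpha \neq \beta$ in $F^+_d$. Since the $V_k$ have orthogonal ranges, it suffices to check that $V_k \cY_0 \perp \cY_0$ for every $k = 1,\ldots,d$; indeed, if $\alpha \neq \beta$ and they start with different generators, orthogonality is automatic, and if they share a common prefix we may cancel it using the isometry property, reducing to the case where one of the words is empty, i.e.\ to $V_\gamma \cY_0 \perp \cY_0$ for $\gamma \neq 0$, which in turn reduces to $V_k \cY_0 \perp \cY_0$ once we know $\cY_0 \subset \cH \oplus \cU_0$ and invoke the reasoning pattern from Example~2.2.

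\textbf{The key computation.}

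First I would fix $y \in \cY_0 \subset U^*_1(\cH_S \otimes \cP_1)$, so that $y = U^*_1(\xi)$ for some $\xi \in \cH_S \otimes \cP_1$ (tensored with the suppressed vacuum tail). Applying $V_k$ and using the defining formula $V_k(\zeta \otimes \eta) = U^*_1(\zeta \otimes \epsilon_k \otimes \eta)$, I would push the $U^*_1$ appearing in $y$ through the shift so that $V_k y$ lives in $\cH \otimes \cK_1 \otimes \cK_2 \otimes \cdots$ with $U^*_2 U^*_1$ acting on the first two beam factors. The crucial point is then to compute the inner product $\langle V_k y, y' \rangle$ for $y, y' \in \cY_0$. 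Because $y' \in \cY_0 \subset U^*_1(\cH_S \otimes \cP_1) \ominus (\cH_S \otimes \Omega_{\cK_1})$, the vector $U_1 y'$ lies in $\cH_S \otimes \cP_1$ and is orthogonal to $\cH_S \otimes \Omega_{\cP_1}$; I would use the hypothesis $U(\cH_S \otimes \Omega_{\cK}) \subset \cH_S \otimes \cP$ to control where the $\cH_S$-component of $V_k y$ lands after the relevant $U$-factors act, mirroring the argument in Example~2.2 where $V^{k-1}\cY_0 \perp \cH_S$ and $V^k \cY_0 \perp V\cH_S$ were combined.

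\textbf{Where the work concentrates.}

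The main obstacle is bookkeeping the two distinct unitaries $U_1$ and $U_2$ that appear once $V_k$ has shifted the index: one must verify that the $\cH_S$-invariance hypothesis, which is a statement about a \emph{single} interaction step $U$ applied to $\cH_S \otimes \Omega_\cK$, propagates correctly through the shifted tensor slots so that the component of $V_k y$ feeding into the inner product with $y'$ is forced into a subspace orthogonal to $U_1^{-1}$-image of $\cY_0$. Concretely, I expect the orthogonality $V_k \cY_0 \perp \cY_0$ to follow from the fact that $\cY_0 \perp \cH_S \otimes \Omega_{\cK_1}$ together with $U_1(\cH_S \otimes \Omega_{\cK_1}) \subset \cH_S \otimes \cP_1$: the shift moves the ``old'' $\cP_1$-content into the $\cK_2$-slot as a vacuum-orthogonal vector, and the invariance keeps the $\cH_S$-part intact so that it can be paired against the vacuum-orthogonal first-slot content of $y'$, yielding zero. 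Once $V_k \cY_0 \perp \cY_0$ is established for each $k$, the reduction in the first paragraph upgrades this to full wandering, and the special case of genuine vacuum vectors (where $\cH_S = \cH$ and $U(\Omega_\cH \otimes \Omega_\cK) = \Omega_\cH \otimes \Omega_\cP$) falls out as the instance recovering the claim left unproven earlier in the section.
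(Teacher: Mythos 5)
The decisive step of your plan is the claimed reduction of the wandering property to $V_k \cY_0 \perp \cY_0$ for single generators $k$, and that reduction is false. Cancelling common prefixes does reduce wandering to $V_\alpha \cY_0 \perp \cY_0$ for \emph{all} words $\alpha \neq 0$, but orthogonality for $|\alpha| = 1$ does not self-improve to longer words, not even under the orthogonal-past condition $\cY_0 \subset \cH \oplus \cU_0$. For example, let $V$ be the unilateral shift on $\ell^2$ with orthonormal basis $(e_n)_{n \ge 0}$, take $\cU_0 = \Cset e_2$ (wandering, with $\cH = \operatorname{span}\{e_0,e_1\}$) and $\cY_0 = \Cset(e_0 + e_2) \subset \cH \oplus \cU_0$; then $V \cY_0 \perp \cY_0$ but $V^2 \cY_0 \not\perp \cY_0$. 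The appeal to ``the reasoning pattern from Example 2.2'' does not repair this, because that argument is not a reduction to single powers: it proves $V^k \cY_0 \perp \cH_S$ and $V^k \cY_0 \perp V\cH_S$ for \emph{all} $k \ge 1$ at once, using that $\cH_S$ is $V^*$-invariant, and only then concludes $V^k \cY_0 \perp \cY_0$ from $\cY_0 \subset \overline{span}\{\cH_S, V\cH_S\}$. Your sketch contains no statement at all about $V_\alpha \cY_0$ for $|\alpha| \ge 2$.

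What is needed instead (and what the paper's proof supplies) is a statement that can be iterated along a word: for any vector $\zeta$ orthogonal to $\cH_S \otimes \Omega_{\cK_1}$ (vacuum tail suppressed), (i) $V_k \zeta$ is again orthogonal to $\cH_S \otimes \Omega_{\cK_1}$, and (ii) $V_k \zeta \perp U_1^*(\cH_S \otimes \cP_1)$. Then for $y \in \cY_0$ and $\alpha = \alpha_1 \ldots \alpha_r \neq 0$ one applies (i) repeatedly to $V_{\alpha_2} \cdots V_{\alpha_r} y$ and finishes with (ii) to get $V_\alpha y \perp U_1^*(\cH_S \otimes \cP_1) \supset \cY_0$. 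Note where the hypothesis $U(\cH_S \otimes \Omega_\cK) \subset \cH_S \otimes \cP$ enters: it is needed for (i), where one splits $\zeta$ into a component in $\cH_S \otimes (\mbox{vacuum})^\perp$ and a component in $(\cH \ominus \cH_S) \otimes \bigotimes_\ell \cK_\ell$; the $V_k$-image of the latter lies in $U_1^*\big((\cH \ominus \cH_S) \otimes \cP_1\big)$, which is orthogonal to $\cH_S \otimes \Omega_{\cK_1}$ precisely because the hypothesis puts $\cH_S \otimes \Omega_{\cK_1}$ inside $U_1^*(\cH_S \otimes \cP_1)$. By contrast, the single-letter computation you outline --- which does go through, in the form $\langle V_k y, U_1^*(h \otimes p)\rangle = \langle y, h \otimes \Omega_{\cK_1}\rangle\,\langle \epsilon_k, p\rangle$ --- uses only the unitarity of $U_1$ and $y \perp \cH_S \otimes \Omega_{\cK_1}$, never the hypothesis on $U$. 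So the proposition's hypothesis is invoked exactly in the step your reduction skips; a proof that never uses it for words of length $\ge 2$ cannot be correct. Two smaller slips: from $y' \perp \cH_S \otimes \Omega_{\cK_1}$ you may conclude $U_1 y' \perp U_1(\cH_S \otimes \Omega_{\cK_1})$, not $U_1 y' \perp \cH_S \otimes \Omega_{\cP_1}$ (the latter requires vacuum vectors); and the vacuum-vector situation is recovered with $\cH_S = \Cset\,\Omega_\cH$, not $\cH_S = \cH$.
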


\begin{proof}
Let $\zeta \in \hH$ be any vector orthogonal to  
$\cH_S \otimes \Omega_{\cK_1}$.
Our first observation is that for all $k=1,\ldots,d$ the vectors
$V_k \zeta$ are orthogonal to 
$\cH_S \otimes \Omega_{\cK_1}$ too.
In fact, we can write $\zeta = \zeta_1 \oplus \zeta_2$ 
where 
$\zeta_1 = \xi_0 \otimes \eta$ with $\xi_0 \in \cH_S$ and with  $\eta \in \bigotimes^\infty_{\ell=1} \cK_\ell$ orthogonal to
$\bigotimes^\infty_{\ell=1} \Omega_{\cK_\ell}$
and
$\zeta_2 \in (\cH \ominus \cH_S) \otimes 
\bigotimes^\infty_{\ell=1} \cK_\ell$.
Using the specific form of $V_k$ it follows immediately that $V_k \zeta_1$ is orthogonal to  
$\cH_S \otimes \Omega_{\cK_1}$
and the same is also true for $V_k \zeta_2$ taking into account the assumption
$U \big( \cH_S \otimes \Omega_{\cK} \big) \subset
\cH_S \otimes \cP$, in the form:
$U^*_1 \big((\cH \ominus \cH_S) \otimes \cP_1 \big)$ is orthogonal to
$\cH_S \otimes \Omega_{\cK_1}$.

The second observation is that for all $k=1,\ldots,d$ the vectors
$V_k \zeta$ are orthogonal to $U^*_1 (\cH_S \otimes \cP_1)$.
As $\zeta$ can be approximated by a finite sum
$\sum_j \xi_j \otimes \eta_j$ with $\xi_j \in \cH$ and
$\eta_j \in \bigotimes^\infty_{\ell=1} \cK_\ell$ we may assume for simplicity that $\zeta$ is of this form. 
But then
$\sum_j \xi_j \otimes \epsilon_k \otimes \eta_j$
is orthogonal to
$\cH_S \otimes \cP_1$
and  now an application of $U^*_1$ gives us the result.

Applying these observations repeatedly to elements of $\cY_0$ we conclude that
$\cY_0$ is orthogonal to $V_\alpha \cY_0$ for all $\alpha \not= 0$. This implies that $\cY_0$ is wandering.
\end{proof}

\subsection*{Acknowledgment}
During my visit to Mumbai in $2010$ results closely related to Proposition \ref{prop:wandering} were communicated to me by Santanu Dey. These were in the back of my mind when I worked out a version fitting into the setting of this paper. Further I want to thank the referee for constructive comments leading to a better presentation and for pointing out additional links to the existing literature.

\begin{thebibliography}{1}
\bibitem{BSV05}
J. Ball, C. Sadosky, V. Vinnikov,
\textit{Scattering Systems with Several Evolutions and
Multidimensional Input/State/Output Systems.}
Integral Equations and Operator Theory \textbf{52} (2005), 323-393
\bibitem{BV05a}
J. Ball, V. Vinnikov,
\textit{Functional Models for Representations of the
Cuntz Algebra.} In:
Operator Theory, System Theory and Scattering Theory:
Multidimensional Generalizations. 
Operator Theory, Advances and Applications, vol. \textbf{157},
Birkh\"auser (2005), 1-60
\bibitem{BV05b}
J. Ball, V. Vinnikov,
\textit{Lax-Phillips Scattering and Conservative Linear Systems: A Cuntz-Algebra Multidimensional Setting.}
Memoirs of the AMS, vol. 178, no. \textbf{837} (2005)
\bibitem{DG11}
S. Dey, R. Gohm, 
\textit{Characteristic Functions of Liftings.} 
Journal of Operator Theory \textbf{65} (2011), 17-45 
\bibitem{FF90}
C. Foias, A.E. Frazho,
\textit{The Commutant Lifting Approach to Interpolation Problems.}
Operator Theory, Advances and Applications, vol. \textbf{44},
Birkh\"auser, 1990.
\bibitem{Go09}
R. Gohm,
\textit{Noncommutative Markov Chains and Multi-Analytic Operators.}
J. Math. Anal. Appl., vol. \textbf{364}(1) (2009), 275-288
\bibitem {KM00} 
B. K\"{u}mmerer, H. Maassen, 
\textit{A Scattering Theory for Markov Chains.}
Inf. Dim. Analysis, Quantum Prob. and Related Topics, 
vol.\textbf{3} (2000), 161-176
\bibitem{Po89a}
G. Popescu, 
\textit{Isometric Dilations for Infinite Sequences of
Noncommuting Operators.} 
Trans. Amer. Math. Soc.
\textbf{316} (1989), 523-536.
\bibitem{Po89b}
G. Popescu, 
\textit{Characteristic Functions for Infinite Sequences
of Noncommuting Operators.} 
J. Operator Theory \textbf{22}
(1989), 51-71.
\bibitem{Po95}
G. Popescu, 
\textit{Multi-Analytic Operators on Fock Spaces.}
Math. Ann. \textbf{303} (1995), no. 1, 31-46.
\bibitem{Po99}
G. Popescu, 
\textit{Structure and Entropy for Toeplitz Kernels.} 
C.R. Acad. Sci. Paris Ser. I Math. \textbf{329} (1999), 129-134.
\bibitem{RR85}
M. Rosenblum, J. Rovnyak,
\textit{Hardy Classes and Operator Theory.}
Oxford University Press, 1995
\bibitem{SF70}
B. Sz.-Nagy, C. Foias,
\textit{Harmonic Analysis of Operators.}
North Holland, 1970
\end{thebibliography}
\end{document}